\newcommand{\tykkp}{\mathbb{P}}
\newcommand{\fino}{\mathcal{O}}
\newcommand{\fineca}{\mathcal{E}_{C,A}}
\newcommand{\finixi}{\mathcal{I}_{\xi}}
\newcommand{\Hom}{\mathrm{Hom}}
\newcommand{\finfca}{\mathcal{F}_{C,A}}
\newcommand{\fing}{\mathcal{G}}
\newcommand{\fini}{\mathcal{I}}
\newcommand{\length}{\mathrm{length}}
\newcommand{\Cliff}{\mathrm{Cliff}}
\newcommand{\rk}{\mathrm{rk}}
\newcommand{\Nvee}{N^{\vee}}
\newcommand{\Mvee}{M^{\vee}}
\newcommand{\subjclass}[2][2010]{%
  \let\@oldtitle\@title%
  \gdef\@title{\@oldtitle\footnotetext{#1 \emph{Mathematics subject classification.} #2}}%
}
\newcommand{\keywords}[1]{%
  \let\@@oldtitle\@title%
  \gdef\@title{\@@oldtitle\footnotetext{\emph{Key words.} #1.}}%
}
\newtheorem{Prop}{Proposition}[section]
\newtheorem{Theorem}[Prop]{Theorem}
\newtheorem{Lemma}[Prop]{Lemma}
\newtheoremstyle{Conjecture}{\topsep}{\topsep}%
  {}
  {}
  {\bfseries}
  {.}
  { }
  {\thmname{#1}\thmnumber{ #2}\thmnote{ #3}}
\theoremstyle{definition}
\newtheorem{Conjecture}[Prop]{Conjecture}
\newtheorem{Remark}[Prop]{Remark}
\author{Nils Henry Rasmussen}
\subjclass{14H51, 14J28}
\keywords{Lazarsfeld--Mukai bundles, curves on K3 surfaces, Brill--Noether theory}
\begin{document}

\title{Pencils and nets on curves arising from rank 1 sheaves on K3 surfaces}

\maketitle


\begin{abstract}
Let $S$ be a K3 surface, $C$ a smooth curve on $S$ with $\fino_S(C)$ ample, and $A$ a base-point free $g^2_d$ on $C$ of small degree. We use Lazarsfeld--Mukai bundles to prove that $A$ is
cut out by the global sections of a rank 1 torsion-free sheaf $\fing$ on $S$. Furthermore, we show that $c_1(\fing)$ with one exception is adapted to $\fino_S(C)$ and satisfies
$\Cliff(c_1(\fing)_{|C})\leq\Cliff(A)$, thereby confirming a conjecture posed by Donagi and Morrison. We also show that the same methods can be used to give a simple proof of the conjecture
in the $g^1_d$ case.

In the final section, we give an example of the mentioned exception where $h^0(C,c_1(\fing)_{|C})$ is dependent on the curve $C$ in its linear system, thereby failing to be adapted to
$\fino_S(C)$.
\end{abstract}

\section{Introduction}

In the past 30 years, one central problem in the study of the existence of $g^r_d$'s on smooth curves has been to find connections between sheaves on K3 surfaces $S$ and linear systems on
curves $C$ lying on $S$. This started with Lazarsfeld (\cite{Lazarsfeld}) and Tyurin (\cite{Tyurin}) independently introducing vector-bundles $\fineca$ on $S$, depending on a smooth curve
$C$ and base-point free complete linear system $A$ on $C$, providing much information on the geometry of the curve $C$ and existence of other linear systems on the curve.

These vector-bundle techniques have given grounds for many results. Among these, Green and Lazarsfeld (\cite{G-L}) proved in 1987 that the Clifford index is constant for all smooth curves
$C$ in a linear system on a K3 surface, and that it is maximal (i.e., $=\lfloor(g-1)/2\rfloor$) if the Picard group is generated by $\fino_S(C)$. In 1995, Ciliberto and Pareschi
(\cite{Ciliberto}) proved that the gonality is constant for all smooth curves in an ample linear system on a K3 surface. Knutsen (\cite{Knutsen}) proved that this is also true for non-ample
linear systems, except for one particular case. Furthermore, he proved that there exist only two examples of exceptional curves. Among other notable results, Aprodu and Farkas
(\cite{Aprodu-Farkas}) proved in 2011 that the Green conjecture is satisfied for all smooth curves on K3 surfaces. They also found the exact dimension of $g^1_d$'s for the general curves in
a linear system.


Lelli-Chiesa (\cite{lelli2015generalized}) proved a conjecture posed by Donagi and Morrison (\cite{D-M}), in the case of K3 surfaces without $(-2)$ curves, $d\leq g-1$ and
$\Cliff(A)=\Cliff(C)$. The conjecture is stated as follows:

\begin{Conjecture}[(Donagi--Morrison, \cite{D-M})]
Suppose $C$ is a smooth curve on a K3 surface $S$, and let $A$ be a base-point free complete $g^r_d$ on $C$ such that $\rho(g,r,d)<0$. Then there exists a line bundle $D$ on $S$, adapted to
$\fino_S(C)$, such that $A\leq D_{|C}$ and $\Cliff(D_{|C})\leq \Cliff(A)$.
\end{Conjecture}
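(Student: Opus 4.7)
The plan is to attach to $A$ the Lazarsfeld--Mukai bundle $\fineca$ of rank $r+1$ on $S$, fitting in the standard sequence
$$0 \longrightarrow \finfca \longrightarrow H^0(C,A)\otimes \fino_S \longrightarrow A \longrightarrow 0,$$
where $\finfca := \fineca^{\vee}$ has $c_1=-[C]$ and $c_2=d$. A Riemann--Roch calculation on the K3 yields $\chi(\fineca \otimes \fineca^{\vee}) = 2 - 2\rho(g,r,d)$, so the hypothesis $\rho<0$ forces $h^0(\fineca\otimes\fineca^{\vee})\geq 2$ (using Serre duality and $K_S=\fino_S$, which equates $h^0$ and $h^2$). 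Hence $\fineca$ admits a non-scalar endomorphism and is in particular not simple; this non-simplicity is the engine that will produce the line bundle $D$.

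I would exploit it by taking a generalized eigenspace of a non-scalar endomorphism and saturating to obtain a proper saturated subsheaf of $\finfca$. After passing to duals and, when $r\geq 2$, further destabilizing the rank $r$ quotient, the target is an exact sequence
$$0 \longrightarrow \fing \longrightarrow \finfca \longrightarrow \finixi(-D) \longrightarrow 0,$$
with $\finixi$ the ideal sheaf of a $0$-dimensional subscheme $\xi\subset S$ and $D$ an effective divisor class. The candidate line bundle for the conjecture is then $\fino_S(D)$, and the role of $\fing$ is to house the ``leftover'' rank after the rank $1$ quotient has been extracted.

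Given such a sequence, I would verify the three properties in turn. For $A\leq D_{|C}$: the surjection $\finfca \twoheadrightarrow \finixi(-D)$ combined with the LM sequence yields a non-zero section of $\fino_C(D)\otimes A^{-1}$, giving the required inclusion. For the Clifford index bound: compute $\deg D_{|C}=C\cdot D$ and bound $h^0(D_{|C})\geq r+1$ from the sections cutting out the quotient, then feed these into the Hodge index theorem applied to the pair $(C,D)$ together with the destabilizing inequality, closing the numerical estimate $\Cliff(D_{|C})\leq \Cliff(A)$. Adaptedness of $\fino_S(D)$ to $\fino_S(C)$ would then follow from effectivity of both $D$ and $C-D$ plus positivity of $D\cdot(C-D)$, which makes $h^0(D)$ constant on $|\fino_S(C)|$.

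The main obstacle, and the reason the conjecture is genuinely delicate for $r\geq 2$, is the extraction step. A first destabilization of the rank $(r+1)$ bundle $\fineca$ only produces a proper subsheaf of some intermediate rank, not necessarily a line bundle, so one must iteratively peel off the Harder--Narasimhan filtration while controlling the cohomology and effectivity of each successive quotient in order to force $\finfca/\fing$ into the required rank $1$ form $\finixi(-D)$. Worse, even when such a $\fing$ exists, $c_1(\fing)$ can fail to be adapted to $\fino_S(C)$ -- precisely the exceptional behaviour flagged in the abstract -- so one must detect and separately handle those degenerate cases. Controlling both phenomena uniformly for every $(g,r,d)$ with $\rho<0$ is what pushes the conjecture beyond the techniques presented here and is, to my knowledge, the reason it remains open for large $r$.
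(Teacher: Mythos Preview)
The statement you are addressing is the Donagi--Morrison \emph{conjecture}; the paper does not prove it in general and does not claim to. What the paper actually establishes are two special cases: the $r=1$ case (Theorem~\ref{main2}), reproving Donagi and Morrison's own result by a streamlined argument, and the $r=2$ case under the additional hypotheses that $L=\fino_S(C)$ is ample and $d\leq\tfrac{1}{6}L^2$, with one explicitly excluded configuration (Theorem~\ref{main}). Your final paragraph already concedes that the full conjecture lies beyond these techniques, so your write-up is a strategy sketch rather than a proof, and should be labelled as such.

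Two substantive points on the sketch itself. First, the paper does not produce its subsheaf from a generalised eigenspace of a non-scalar endomorphism; for $r=2$ it invokes Bogomolov's inequality to conclude that $\finfca$ is $\mu_L$-\emph{unstable} (not merely non-simple) and then takes the maximal destabilising subsheaf $M\subset\finfca$. The crux is Proposition~\ref{rk2}, which shows---using the numerical hypothesis $d\leq\tfrac{1}{6}L^2$ in an essential way---that $\rk(M)=2$, so the quotient is already of rank~$1$ and no iterated peeling is needed. Your outline offers no mechanism to force the correct rank without such a hypothesis, which is exactly the obstruction you flag at the end. Second, your identification of $D$ is off by a complement. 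With your sequence $0\to\fing\to\finfca\to\finixi(-D)\to 0$, dualising gives $\fino_S(D)\hookrightarrow\fineca$, and composing with $\fineca\twoheadrightarrow\omega_C\otimes A^{-1}$ yields a section of $\omega_C\otimes A^{-1}(-D_{|C})$, i.e.\ $A\leq (L\otimes\fino_S(-D))_{|C}$, not $A\leq D_{|C}$. The paper's line bundle is $c_1(M^\vee)$, extracted instead via diagram~\eqref{diagram} and the snake lemma, and it equals $L\otimes\fino_S(-D_{\mathrm{yours}})$. Adaptedness and Clifford index are symmetric under $D\mapsto L\otimes D^{\vee}$, so those parts survive, but the inclusion $A\leq D_{|C}$ is not symmetric, so this bookkeeping error matters.
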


Here, the \emph{Clifford index} of a line bundle $A$ on a smooth curve $C$ is defined as $\Cliff(A):=\deg(A)-2(h^0(C,A)-1)$. We also mention that
$\Cliff(C):=\min\{\Cliff(A)\,|\,h^0(C,A),h^1(C,A)\geq 2\}$ (but where $\Cliff(C)$ is defined to be $0$ for hyperelliptic curves of genus $2$ or $3$, and $1$ for trigonal curves of genus
$3$). The
value $\rho(g,r,d)$ is the \emph{Brill--Noether number} and is defined as $\rho(g,r,d):=g-(r+1)(g-d+r)$. A line bundle $D$ on $S$ is said to be \emph{adapted to the line bundle} $L$ if: 
\begin{itemize}
\item[(i)] $h^0(S,D)\geq 2$ and $h^0(S,L\otimes D^{\vee})\geq 2$, and
\item[(ii)] $h^0(C,D_{|C})$ is independent of the curve $C\in |L|_s$;
\end{itemize}
where $|L|_s$ denotes smooth curves in $|L|$.

The conjecture was proved in \cite{D-M} for the case of $g^1_d$'s, and basically involved proving that $c_1$ of the cokernel of the maximal destabilising sequence of $\fineca$ satisfies the
conditions of the line bundle $D$ in the conjecture, with the exception of one special case.

Part of the conjecture was also proved by Lelli-Chiesa in \cite{lelli2013stability} for the case of $g^2_d$'s on curves on maximal gonality and Clifford dimension 1. There, the idea was to
prove that the kernel of the maximal destabilising sequence of $\fineca$ can be assumed to be of rank $1$, and that the determinant of the cokernel is the desired line bundle $D$ of the
conjecture.

In the proof of our result, we use similar ideas. The main result states that the divisors in base-point free complete $g^2_d$'s for small $d$ are equal to global sections of torsion-free
sheaves of rank $1$ on $S$ restricted to $C$. The torsion-free sheaves arise naturally from a maximal destabilising sequence of $\finfca=\fineca^{\vee}$, and $c_1$ of these sheaves satisfy
the conditions on $D$ of the conjecture, similar to what is done in Donagi--Morrison's and Lelli-Chiesa's proofs.

Our main result is the following:

\begin{Theorem}\label{main}
Let $S$ be any K3 surface, and let $L$ be an ample line-bundle on $S$. If $C\in |L|$ is smooth and $A$ is a base-point free complete $g^2_d$ on $C$ satisfying $d\leq\frac{1}{6}L^2$, then
the following is satisfied:
\begin{itemize}
\item[(a)] There exists a linear system $|D|$ on $S$ and a finite subscheme $\xi\subset S$ such that every divisor in $|A|$ is equal to an element in $|D\otimes\finixi|$ restricted to
    $C$, where $\finixi$ is the ideal sheaf of $\xi$.
\end{itemize}
Suppose furthermore that there do not exist an elliptic pencil $E$ and a $(-2)$ curve $\Gamma$ satisfying both conditions $A=E^{\otimes 2}_{|C}$ and $(L\otimes E^{\otimes (-2)}).\Gamma=-2$.
Then:
\begin{itemize}
\item[(b)] The line bundle $D$ found in (a) is adapted to $L$; and
\item[(c)] $\Cliff(D_{|C})\leq\Cliff(A)$.
\end{itemize}
\end{Theorem}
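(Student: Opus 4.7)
The plan is to follow the Lazarsfeld--Mukai bundle strategy used by Donagi--Morrison in the $g^1_d$ case and by Lelli-Chiesa in restricted $g^2_d$ settings, and to push it through in the numerical regime $d\leq\tfrac{1}{6}L^2$. First, I would form the dual Lazarsfeld--Mukai sheaf $\finfca=\fineca^{\vee}$, of rank $3$, fitting into
$$0\to\finfca\to H^0(C,A)\otimes\fino_S\to A\to 0,$$
with $c_1(\finfca)=-L$ and $c_2(\finfca)=d$. Bogomolov's inequality says that $L$-slope-semistability of $\finfca$ would force $6d\geq 2L^2$, so the hypothesis $d\leq\tfrac{1}{6}L^2$ already guarantees that $\finfca$ is $L$-unstable and admits a non-trivial maximal destabilising subsheaf.

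Next, I would take a maximal destabilising subsheaf $\mathcal{M}\subset\finfca$ giving
$$0\to\mathcal{M}\to\finfca\to\mathcal{N}\to 0$$
with $\mathcal{N}$ torsion-free, and carry out a rank analysis. Since $\rk(\finfca)=3$, one has $\rk(\mathcal{M})\in\{1,2\}$; in the first case I would destabilise $\mathcal{N}$ once more, using the Hodge index theorem together with $d\leq\tfrac{1}{6}L^2$ to produce a rank-$1$ torsion-free quotient, and in the second case $\mathcal{N}$ itself is rank $1$. Either way I obtain a torsion-free $\fing$ of rank $1$ on $S$, which I then write as $\fing=\finixi\otimes D$ for a line bundle $D$ and finite subscheme $\xi\subset S$; by construction $D=c_1(\fing)$. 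Part (a) follows by diagram-chasing: sections of $\fing$ lift through the destabilising sequence to produce maps into $\fineca=\finfca^{\vee\vee}$, and evaluating on $C$ identifies every divisor of $|A|$ with the restriction of a section of $|D\otimes\finixi|$.

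For part (b), adaptation of $D$ to $L$ has two pieces. The inequalities $h^0(S,D)\geq 2$ and $h^0(S,L\otimes D^{\vee})\geq 2$ come from Riemann--Roch on $S$ combined with the effectivity data built into the destabilising sequence. The constancy of $h^0(C,D_{|C})$ along $|L|_s$ is analysed through the sequence
$$0\to D\otimes L^{\vee}\to D\to D_{|C}\to 0,$$
which reduces the question to a vanishing of $H^1$; this vanishing can be obstructed only by a $(-2)$-curve $\Gamma$ producing a cohomology jump, and identifying the unique such obstruction with the excluded configuration $A=E^{\otimes 2}_{|C}$, $(L\otimes E^{\otimes(-2)}).\Gamma=-2$ is the geometric core of (b). Part (c) is then a numerical check: having pinned down $c_1(\fing)=D$ from the destabilising slope equality and the length of $\xi$, Riemann--Roch on $C$ combined with the previous cohomology vanishings yields $\Cliff(D_{|C})\leq d-4=\Cliff(A)$.

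The main obstacle is the rank analysis of the Harder--Narasimhan filtration: in rank $3$ the possible shapes $(1,2)$ and $(2,1)$ both arise, and in the $(1,2)$ case one cannot read off a rank-$1$ quotient directly, so one must destabilise once more and control the resulting $c_1$ via a coupled Bogomolov--Hodge index argument that exploits the full strength of $d\leq\tfrac{1}{6}L^2$. Keeping track of the finite-length discrepancy $\xi$, and of the possibility that $\fing$ acquires torsion in the quotient step, requires careful sheaf-theoretic bookkeeping. A secondary obstacle appears in (b): matching the failure of adaptation exactly to the pair $(E,\Gamma)$ of the stated condition calls for a case analysis on the effective decompositions of $L$ with respect to $D$, and in particular on when $|D|$ can contain a fixed $(-2)$-curve meeting the curves in $|L|$ in a non-constant way.
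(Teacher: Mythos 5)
Your proposal reproduces the paper's overall strategy (Lazarsfeld--Mukai bundle, Bogomolov instability from $d\leq\frac16 L^2$, maximal destabilising sequence, a rank-1 torsion-free sheaf cutting out $A$), but it leaves unaddressed precisely the two steps that carry the real content. First, the rank analysis. You cannot ``destabilise $\mathcal{N}$ once more'' in the $(1,2)$ case: $N$ is $\mu_L$-semistable by maximality of the destabilising subsheaf, and, more importantly, a rank-$2$ subsheaf of $\finfca$ obtained by enlarging a rank-$1$ maximal destabilising $M$ loses the slope bound $c_1(M).L>-\tfrac23 L^2$ on which everything downstream depends (it forces $h^0(S,D\otimes L^{\vee})=0$ in (a), the effectivity statements in (b), and the semistability of $\tilde M$ used in (c)). The paper instead \emph{rules out} $\rk(M)=1$ (Lemma \ref{mngeq0} and Proposition \ref{rk2}): Bogomolov's inequality for the semistable rank-$2$ quotient $N$ combined with $M.c_1(N)\geq 0$ gives $c_2(\finfca)>\tfrac16 L^2$, contradicting $d\leq\tfrac16 L^2$. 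Your text contains no argument for this, and your fallback for the $(1,2)$ shape would not deliver the needed numerical control. Relatedly, the sheaf whose sections cut out $A$ is not the Harder--Narasimhan quotient $N$ (whose $c_1$ pairs negatively with $L$ and which has no sections) but the cokernel $\fing$ of the composition $M\hookrightarrow\finfca\hookrightarrow\fino_S^{\oplus 3}$, with $c_1(\fing)=c_1(M^{\vee})$; the proposal conflates these two rank-$1$ sheaves.

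Second, part (c) is not ``a numerical check.'' The immediate estimate gives $\Cliff(D_{|C})\leq c_2(\finfca)-c_2(\tilde M)-2=d-c_2(\tilde M)-2$, so one must prove $c_2(\tilde M)\geq 2$. Bogomolov only yields $c_2(\tilde M)\geq\tfrac14 c_1(\tilde M)^2\geq 1$, and excluding $c_2(\tilde M)=1$ costs the paper a full page of case analysis on extensions $0\to R_1\otimes\fini_{\nu}\to\tilde M\to R_2\otimes\fini_{\eta}\to 0$, using the Hodge index theorem, elliptic pencils and $(-2)$-curves. Nothing in your outline substitutes for this. Similarly, in (b) you correctly anticipate that the only obstruction to adaptation is the exceptional pair $(E,\Gamma)$, but the actual mechanism --- $c_1(M)^2=c_2(M)+c_2(\fing)\geq 0$ with $c_1(M)^2=0$ forcing $\fing=D$, $h^0(S,D)=3$, $D^2=0$ and hence $D=E^{\otimes 2}$, followed by the $h^1(S,L\otimes E^{\otimes(-2)})$ computation --- is asserted rather than supplied. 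As it stands the proposal is a roadmap matching the paper's route, with genuine gaps at its two hardest junctions.
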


\begin{Remark}
In the case where there exist an elliptic pencil $E$ and a $(-2)$ curve $\Gamma$ satisfying $A=E^{\otimes 2}_{|C}$ and $(L\otimes E^{\otimes (-2)}).\Gamma=-2$, we can construct examples
where $h^0(C,E^{\otimes 2}_{|C})$ is dependent on the curve $C$ in $|L|_s$. See Section \ref{example}.
\end{Remark}

The tools used in the proof of Theorem \ref{main}, can also be used to give a simple proof of Donagi and Morrison's result for the $g^1_d$ case (\cite[Theorem 5.1']{D-M}). Here, we avoid
the special case that was considered in the original proof. Furthermore, as in Theorem \ref{main}, we also here prove that all divisors in $|A|$ are equal to the restriction to $C$ of the
global sections of a rank-$1$ torsion-free sheaf on $S$.

\begin{Theorem}\label{main2}
Let $|L|$ be any base-point free linear system on a K3 surface $S$. If $C\in |L|$ is smooth and $A$ is a base-point free complete $g^1_d$ on $C$ satisfying $\rho(g,1,d)<0$, then the
following is
satisfied: 
\begin{itemize}
\item[(a)] There exists a linear system $|D|$ on $S$ and a finite subscheme $\xi\subset S$ such that every divisor in $|A|$ is equal to an element in $|D\otimes\finixi|$ restricted to
    $C$, where $\finixi$ is the ideal sheaf of $\xi$;
\item[(b)] the line bundle $D$ found in (a) is adapted to $L$; and
\item[(c)] $\Cliff(D_{|C})\leq\Cliff(A)$.
\end{itemize}
\end{Theorem}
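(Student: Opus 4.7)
The plan is to mirror the approach used for Theorem \ref{main}, which simplifies considerably in the rank-$2$ setting at hand. Since $r=1$, the Lazarsfeld--Mukai bundle $\fineca$ has rank $2$, with $c_1(\fineca)=L$, $c_2(\fineca)=d$, and $h^0(\fineca)=g-d+3$, which is at least $4$ under the hypothesis $\rho(g,1,d)<0$. I would begin by producing a short exact sequence
\[
0\to M\to\fineca\to \fing \to 0,\qquad \fing\cong N\otimes\finixi,
\]
by choosing $M$ to be a saturated line subbundle of $\fineca$ with $M.L$ maximal; existence is automatic via Mukai's lemma applied to any non-zero global section of $\fineca$. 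Here $N\in\Pic(S)$ satisfies $M\otimes N\cong L$, and $\xi\subset S$ is finite of length $d-M.N$.

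For part (a), the Lazarsfeld--Mukai construction identifies $H^0(C,A)$ with a distinguished two-dimensional subspace $W\subset H^0(\fineca)$ whose sections cut out the divisors of $|A|$. The key observation is that base-point freeness of $|A|$ prevents any non-zero element of $W$ from factoring through $M\hookrightarrow \fineca$, so $W$ injects into $H^0(S,N\otimes\finixi)$. Restricting to $C$ then expresses divisors of $|A|$ as restrictions of divisors in $|D\otimes\finixi|$ for $D:=N$.

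For parts (b) and (c), I would verify each condition separately. Effectivity of $D=N$ with $h^0(D)\geq 2$ follows from part (a). The condition $h^0(L\otimes D^\vee)=h^0(M)\geq 2$ is more subtle and may require enlarging $M$ (with corresponding modification of $\xi$) when $M$ is rigid; this is a standard maneuver within the Lazarsfeld--Mukai framework. Independence of $h^0(C,N_{|C})$ on $C\in|L|_s$ would then follow from the long exact sequence of $0\to N\otimes L^\vee\to N\to N_{|C}\to 0$, once one establishes the vanishing $h^1(S,N\otimes L^\vee)=h^1(S,M^\vee)=0$. For the Clifford inequality (c), I would combine $L=M+N$ in $\Pic(S)$, $d=M.N+\length(\xi)$, Riemann--Roch on $S$, and $h^0(C,N_{|C})\geq 2$ from part (a) to conclude $\Cliff(N_{|C})\leq d-2=\Cliff(A)$.

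The main obstacle is the adaptedness verification in (b). Unlike Theorem \ref{main}, the $g^1_d$ setting admits no exceptional case, which signals that base-point freeness of $|L|$ plays a crucial role in excluding the pathological elliptic-pencil/$(-2)$-curve obstruction appearing in the $g^2_d$ analogue. I expect to use the Hodge index theorem applied to the pair $(M,N)$ together with base-point freeness of $|L|$ to show that any $(-2)$-curve $\Gamma$ causing $h^0(C,N_{|C})$ to jump would force $L.\Gamma<0$, contradicting the nefness of $L$.
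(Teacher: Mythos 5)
Your overall strategy is the paper's: destabilise the rank-two Lazarsfeld--Mukai bundle, take $D$ to be the determinant of the rank-one quotient, and then check the three conditions. But several load-bearing steps are missing or misdirected. First, you never establish that $\fineca$ (equivalently $\finfca=\fineca^{\vee}$) is non-$\mu_L$-stable, which is where $\rho(g,1,d)<0$ actually enters: non-negativity of $2-2\rho=\chi(\finfca\otimes\finfca^{\vee})\geq 4$ forces non-simplicity, hence instability, hence the slope bound $M.L\geq\tfrac12 L^2$ for your subbundle. Merely taking ``$M.L$ maximal'' carries no numerical information, and that bound is exactly what drives the later steps (it shows the kernel of the map onto $A$ has no sections, and it is the input for $h^0(S,L\otimes D^{\vee})\geq 2$). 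Second, in part (a) the passage from ``$W$ injects into $H^0(S,N\otimes\finixi)$'' to ``divisors of $|A|$ are restrictions of divisors in $|D\otimes\finixi|$'' is asserted, not proved, and with your choice of $\xi$ (the singularity scheme of the quotient of $\fineca$, of length $d-M.N$) the degrees do not match: a divisor of $|N\otimes\finixi|$ restricted to $C$ has degree $N.L-\deg(\xi\cap C)$, which need not equal $d$. The paper instead defines $\fing$ as the cokernel of $M\to H^0(A)\otimes\fino_S$ (so $\length(\xi)=c_2(\fing)=D^2$), identifies the induced map $\phi\colon\fing\to A$ with a section of $A\otimes D^{\vee}$ away from $\xi$, and rules out $h^0(S,D\otimes L^{\vee})>0$ to conclude $A\cong D_{|C}\otimes\fini_{\xi\cap C}$; some version of this analysis is unavoidable.

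The adaptedness argument also needs repair. You propose to prove independence of $h^0(C,D_{|C})$ via $h^1(S,D\otimes L^{\vee})=h^1(S,M^{\vee})=0$; this is equivalent to $h^1(S,M)=0$, which there is no reason to expect ($M$ need not be nef, and nothing prevents a $(-2)$-curve from meeting it negatively). The paper proves the other sufficient vanishing, $h^1(S,D)=0$: since $D=c_1$ of the quotient of a bundle globally generated off a finite set, $D$ is nef, so $h^1(S,D)>0$ would force $D^2=0$; and $D^2=M^2=c_2(\fing)$ vanishes only when $\fing=D$ with $h^0(S,D)=2$, i.e.\ $D=E$ an elliptic pencil, for which $h^1(S,E)=0$. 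Note this also corrects your diagnosis of why there is no exceptional case here: it is not base-point freeness of $|L|$ but the fact that the degenerate $D$ is a single elliptic pencil $E$ (with $h^1=0$) rather than $E^{\otimes 2}$ (with $h^1=1$) as in the $g^2_d$ situation. Finally, for $h^0(S,L\otimes D^{\vee})\geq 2$ you should not need to ``enlarge $M$'': writing $L\otimes D^{\vee}=M$, the slope bound gives $M.L\geq\tfrac12 L^2=\tfrac12(M+N)^2$, whence $M^2\geq N^2\geq 0$ (the latter because $N=D$ is globally generated off a finite set), and Riemann--Roch together with $M.L>0$ finishes. Part (c) is essentially right, but only if you use $h^0(C,D_{|C})\geq h^0(S,D)\geq 2+\tfrac12 D^2$ rather than just $h^0(C,D_{|C})\geq 2$; with the weaker bound the inequality $\Cliff(D_{|C})\leq d-2$ does not follow.
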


We will be working in characteristic $0$ throughout this paper.

\section{Proof of theorem}

The Lazarsfeld--Mukai vector bundles are defined as follows: Given a smooth curve $C$ of genus $g$ on $S$ and a base-point free, complete $g^r_d$ $A$ on $C$, the vector-bundle $\finfca$ on
$S$ is defined as the kernel of the evaluation morphism $H^0(C,A)\otimes\fino_S\rightarrow A \rightarrow 0$. The bundle has the following properties:
\begin{itemize}
\item $\mathrm{rk}(\finfca)=r+1$.
\item $\mathrm{det}(\finfca)=\fino_S(-C)$.
\item $c_2(\finfca)=d$.
\item $h^0(S,\finfca)=h^1(S,\finfca)=0$.
\item $\chi(S,\finfca)=2-2\rho(g,r,d)$ where $\rho(g,r,d)=g-(r+1)(g-d+r)$ is the Brill--Noether number.
\item The dual, $\finfca^{\vee}$, is globally generated away from a finite set.
\end{itemize}

Note that if $\rho(g,r,d)<0$, then $2h^0(S,\finfca\otimes\finfca^{\vee})\geq\chi(S,\finfca)\geq 4$, and so $\finfca$ is then non-simple, and hence non-stable.

We will for the remainder of this paper -- except in the proof of Theorem \ref{main2} -- assume that $A$ is a base-point free, complete $g^2_d$ on a smooth curve $C\in |L|$ with $L$ ample
and $d\leq\frac{1}{6}L^2$. By \cite[Theorem 3.4.1]{huybrechts2010geometry}, $\finfca$ is then unstable, and there thus exists a maximal destabilising sequence
\begin{equation}\label{maxdestab}
0\rightarrow M\rightarrow\finfca\rightarrow N\rightarrow 0
\end{equation}
such that $M$ is locally free, $N$ is torsion-free and $\mu_L$-semistable, and $c_1(M).L>-\rk(M)\frac{1}{3}L^2$. 

In the statements that follow, we will also be needing the dualisation of this sequence, which is
\begin{equation}\label{maxdestabdual}
0\rightarrow\Nvee\rightarrow\finfca^{\vee}\rightarrow\tilde{M}\rightarrow 0,
\end{equation}
where $\tilde{M}$ is torsion-free and satisfies $\tilde{M}^{\vee}=M$. Since $\finfca^{\vee}$ is globally generated away from a finite set, the same applies for $\tilde{M}$. This sequence is
maximal destabilising for $\finfca^{\vee}$, and so $\tilde{M}$ must be $\mu_L$-semistable.

The following lemma is needed for the proof of Proposition \ref{rk2}, which (among other things) states that we can assume the rank of $M$ to be $2$. This is the most important step for the
proof of Theorem \ref{main}.

\begin{Lemma}\label{mngeq0}
Let $A$, $C$ and $L$ be as above, and consider the maximal destabilising sequence \eqref{maxdestab} of $\finfca$. If $\rk(M)=1$, then $M.c_1(N)\geq 0$.
\end{Lemma}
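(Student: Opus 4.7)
The plan is to extract two geometric constraints on $M$ from the global generation of its dual, and then combine them with the Hodge index theorem and the destabilising slope bound. Since $\det(\finfca)=\fino_S(-C)$ and $\rk(M)=1$, we have $c_1(N)\equiv -L-c_1(M)$ numerically, so $M.c_1(N)=-M.L-M^2$, and the claim reduces to showing $M.L+M^2\leq 0$.

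The first step is to propagate the ``globally generated off a finite set'' property from $\finfca^{\vee}$ down to the line bundle $M^{\vee}$. The dualised sequence \eqref{maxdestabdual} exhibits $\tilde M$ as a torsion-free rank 1 quotient of $\finfca^{\vee}$, and so $\tilde M$ inherits this property. Since $\tilde M\hookrightarrow \tilde M^{\vee\vee}=M^{\vee}$ with finite cokernel, the line bundle $M^{\vee}$ is itself globally generated off a finite set. The trivial case $M=\fino_S$ gives the lemma immediately; otherwise one deduces $h^0(S,M^{\vee})\geq 2$ with no common divisorial component among the zero loci of the sections (such a common component would be a codimension-one base locus, contradicting generation off a finite set), which yields $M^2\geq 0$; and $M.L<0$ because $M^{\vee}$ then has a non-zero effective representative and $L$ is ample.

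Setting $a=M.L\leq 0$ and $b=M^2\geq 0$, the Hodge index theorem applied to $M$ and the ample class $L$ gives $bL^2\leq a^2$, i.e.\ $b\leq a^2/L^2$. The destabilising inequality in \eqref{maxdestab} reads $-a<\tfrac{1}{3}L^2$; multiplying by $-a\geq 0$ yields $a^2\leq -aL^2/3$, so $b\leq -a/3$. Combining, $a+b\leq 2a/3\leq 0$, which is precisely the desired inequality $M.L+M^2\leq 0$.

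The main obstacle I anticipate is purely at the bookkeeping level in the first step: carefully transferring generation off a finite set through the dualisation and the reflexive hull, and extracting simultaneously $M^2\geq 0$ and $M.L\leq 0$ (together with dealing cleanly with the degenerate case $M=\fino_S$). Once those two constraints are in hand, the conclusion is a short Hodge index computation combined with the slope bound.
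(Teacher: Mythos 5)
Your proof is correct, but it takes a genuinely different route from the paper's. Both arguments start from the same two inputs: the slope bound $M.L>-\tfrac{1}{3}L^2$ from the destabilising sequence, and the fact that $\Mvee$ is globally generated away from a finite set (obtained, exactly as you do, by passing to the dualised sequence \eqref{maxdestabdual}). The paper then argues by contradiction: assuming $\Mvee.c_1(\Nvee)<0$, it computes $c_1(\Nvee)^2=c_1(\Nvee).L-c_1(\Nvee).\Mvee>\tfrac{2}{3}L^2>0$, concludes via Riemann--Roch and the sign of $c_1(N).L$ that $c_1(\Nvee)$ is effective, and then derives the contradiction $\Mvee.c_1(\Nvee)\geq 0$ from the generation of $\Mvee$ off a finite set. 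You instead work directly with $M$: you extract $M^2\geq 0$ and $M.L\leq 0$ from the effectivity and base-component-freeness of $\Mvee$, and close the argument with the Hodge index inequality $M^2L^2\leq(M.L)^2$ combined with the slope bound, which gives $M.L+M^2\leq\tfrac{2}{3}M.L\leq 0$ and hence $M.c_1(N)=-M.L-M^2\geq 0$. Your version is direct rather than by contradiction, never needs to establish effectivity of $c_1(\Nvee)$, and yields the slightly sharper quantitative statement $M.c_1(N)\geq-\tfrac{2}{3}M.L$; the paper's version, on the other hand, sets up the effectivity of $c_1(\Nvee)$, which is reused later in the proof of Theorem \ref{main}(b). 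All the individual steps you flag as bookkeeping (transferring generation off a finite set through $\tilde{M}\hookrightarrow\Mvee$, deducing $h^0(S,\Mvee)\geq 2$ and $M^2\geq 0$, and the degenerate case $M=\fino_S$, which in fact cannot occur since $h^0(S,\finfca)=0$) check out.
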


\begin{proof}
Suppose $M.c_1(N)=\Mvee.c_1(\Nvee)<0$.

We dualise the sequence \eqref{maxdestab}, yielding
$$0\rightarrow N^{\vee}\rightarrow\finfca^{\vee}\rightarrow M^{\vee}\otimes\fini_{\eta}\rightarrow 0,$$
where $\fini_{\eta}$ is the ideal sheaf of a $0$-dimensional subscheme $\eta$. Since $\finfca^{\vee}$ is globally generated away from a finite set, then so is $M^{\vee}$, and it follows
that a sufficient condition for $M.c_1(N)=M^{\vee}.c_1(N^{\vee})$ to be $\geq 0$ is that $h^0(S,c_1(N^{\vee}))\geq 1$.

Now, since $M.L>-\frac{1}{3}L^2$ and $M\otimes c_1(N)\cong L^{\vee}$, then $c_1(N).L<-\frac{2}{3}L^2<0$, and so it suffices to show that $c_1(N)^2\geq 0$, since it then follows that either
$c_1(N)$ or
$c_1(\Nvee)$ must be effective, and we see that it must be $c_1(\Nvee)$. 

Now, since we have found that $c_1(N).L<-\frac{2}{3}L^2$, then $c_1(\Nvee).L>\frac{2}{3}L^2$. This gives us the inequality $c_1(\Nvee)^2+c_1(\Nvee).\Mvee=c_1(\Nvee).L>\frac{2}{3}L^2$. And
since we are
assuming that $c_1(\Nvee).\Mvee <0$, it follows that $c_1(\Nvee)^2 >0$. 

The result follows.
\end{proof}

\begin{Prop}\label{rk2}
Let $A$, $C$ and $L$ be as above, and let \eqref{maxdestab} be a maximal destabilising sequence of $\finfca$. Then $\rk(M)=2$; $c_1(M)^2\geq 0$; $c_2(\tilde{M})\geq 0$, where $\tilde{M}$ is
as in \eqref{maxdestabdual}; and $c_2(M)\geq 0$.
\end{Prop}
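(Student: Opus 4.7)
First, to establish $\rk(M)=2$, I argue by contradiction. Suppose $\rk(M)=1$. Lemma \ref{mngeq0} gives $M.c_1(N)\geq 0$, equivalently $M^2\leq -M.L$ after substituting $c_1(N)=-L-M$. Since $c_2(M)=0$ for a line bundle, the Chern-class formula for the short exact sequence \eqref{maxdestab} yields $c_2(N)=d-M.c_1(N)$. Applying Bogomolov's inequality $c_1(N)^2\leq 4c_2(N)$ to the rank-$2$, $\mu_L$-semistable quotient $N$, expanding using $c_1(N)=-L-M$, and invoking $d\leq\tfrac{1}{6}L^2$, one obtains after rearrangement $\tfrac{1}{3}L^2\leq 2M.L+3M^2$. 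Substituting the lemma's inequality $M^2\leq -M.L$ on the right-hand side forces $M.L\leq -\tfrac{1}{3}L^2$, contradicting the maximal-destabilising slope bound $M.L>-\tfrac{1}{3}L^2$.

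Next, with $\rk(M)=2$ in hand, the rank-$1$ torsion-free $N$ has the form $N=N'\otimes\fini_{\eta'}$ for a line bundle $N'$ and a $0$-dimensional subscheme $\eta'$, so $c_2(N)=\length(\eta')\geq 0$. In the dual sequence \eqref{maxdestabdual}, $\tilde M$ is rank $2$ and $\mu_L$-semistable, and it is globally generated off a finite set as a quotient of $\finfca^\vee$. Comparison with its reflexive hull $M^\vee$ through the short exact sequence $0\to\tilde M\to M^\vee\to (N')^\vee\otimes\fino_{\eta'}\to 0$ (coming from the dualisation and the identification $Ext^1(N,\fino_S)=(N')^\vee\otimes\fino_{\eta'}$) produces the Chern-class identity $c_2(\tilde M)=c_2(M)+c_2(N)$. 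Since $\tilde M\subset M^\vee$ has $0$-dimensional cokernel, $M^\vee$ is also globally generated off a finite set; a generic section of the rank-$2$ locally free $M^\vee$ then vanishes only in codimension $2$, and the length of its zero scheme equals $c_2(M^\vee)=c_2(M)\geq 0$. Combined with $c_2(N)\geq 0$, this yields $c_2(\tilde M)\geq 0$.

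Finally, for $c_1(M)^2\geq 0$, apply Bogomolov to the rank-$2$, $\mu_L$-semistable $\tilde M$ to obtain $c_1(M)^2=c_1(\tilde M)^2\leq 4c_2(\tilde M)$, and combine with the identity $c_2(\tilde M)=d+c_1(M).L+c_1(M)^2$ (extracted from $d=c_2(\finfca)=c_2(\tilde M)-c_1(M).L-c_1(M)^2$) to obtain $3c_1(M)^2\geq -4(d+c_1(M).L)$. Since the generic global generation of $\tilde M$ realises $-c_1(M)=c_1(\tilde M)$ as an effective divisor (via the wedge of two generic sections), the bound $c_1(M).L\leq 0$ is automatic. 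The main obstacle is the sharper inequality $d+c_1(M).L\leq 0$, equivalently $-c_1(M).L\geq d$; once that is in place, $c_1(M)^2\geq 0$ follows immediately from the displayed bound. I expect this last step to require a direct analysis of the cokernel of the map $\fino^2\to\tilde M$ furnished by two generic sections---whose first Chern class is $-c_1(M)$ and whose structure relates $-c_1(M).L$ to $d$ through the Chern character of this torsion cokernel---combined with the hypothesis $d\leq\tfrac{1}{6}L^2$ and the destabilising role of the dual sequence.
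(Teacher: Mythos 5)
Your argument for $\rk(M)=2$ is correct and is essentially the paper's own: Bogomolov for the rank-$2$ semistable quotient $N$, the Whitney formula, the hypothesis $d\leq\frac{1}{6}L^2$, and Lemma \ref{mngeq0} combine to force $M.L\leq-\frac{1}{3}L^2$, contradicting the slope bound. Your treatment of $c_2(M)\geq 0$ and $c_2(\tilde{M})\geq 0$ is also correct and takes a genuinely different route: the identity $c_2(\tilde{M})=c_2(M)+c_2(N)$ coming from the torsion cokernel of $\tilde{M}\hookrightarrow\Mvee$, together with the fact that a general section of the locally free rank-$2$ sheaf $\Mvee$ (globally generated off a finite set) has $0$-dimensional zero scheme of length $c_2(M)$. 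The paper instead obtains $c_2(M)\geq 0$ by saturating an effective line subbundle of $\Mvee$ and dualising, and gets $c_2(\tilde{M})\geq 0$ from Bogomolov only after knowing $c_1(\tilde{M})^2\geq 0$; your version avoids that dependence.

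The genuine gap is in $c_1(M)^2\geq 0$. Bogomolov for the semistable $\tilde{M}$ reads $c_1(\tilde{M})^2\leq 4c_2(\tilde{M})$, so it bounds $c_1(M)^2$ from \emph{above}; to extract a lower bound you are forced to assume $d+c_1(M).L\leq 0$, which you do not prove. Unwinding it through your own identity $c_2(\tilde{M})=d+c_1(M).L+c_1(M)^2$, that assumption is equivalent to $c_1(M)^2\geq c_2(\tilde{M})$, which (given $c_2(\tilde{M})\geq 0$) is at least as strong as the statement you are trying to prove; the speculative analysis of the cokernel of $\fino_S^{\oplus 2}\to\tilde{M}$ is not carried out, so this branch is incomplete. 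The intended argument is much shorter and needs no Bogomolov: since $\tilde{M}$ is globally generated away from a finite set, so is its determinant $c_1(\tilde{M})=c_1(\Mvee)$; a line bundle on a K3 surface with sections and no base components is base-point free and nef, whence $c_1(M)^2=c_1(\tilde{M})^2\geq 0$ directly. Your observation that the wedge of two generic sections exhibits $c_1(\tilde{M})$ as effective is most of the way there --- what is missing is only the remark that generic global generation rules out base \emph{components}, not merely non-effectivity, and that this already forces non-negative self-intersection.
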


\begin{proof}
Suppose $\rk(M)=1$. Then $N$ is semistable of rank $2$, and so by \cite[Theorem 3.4.1]{huybrechts2010geometry}, $c_2(N)\geq \frac{1}{4}c_1(N)^2$. Furthermore, $M.L>-\frac{1}{3}L^2$, and so
$c_1(N).L<-\frac{2}{3}L^2$. From \eqref{maxdestab}, we hence get $c_2(\finfca)\geq
M.c_1(N)+\frac{1}{4}c_1(N)^2=\frac{1}{4}c_1(N)(c_1(N)+4M)=\frac{1}{4}c_1(N)(-L+3M)>\frac{2}{12}L^2+\frac{3}{4}c_1(N).M$, and by Lemma \ref{mngeq0}, this is $\geq\frac{1}{6}L^2$. Since
$c_2(\finfca)=\deg(A)$, which was assumed to be $\leq\frac{1}{6}L^2$, this gives the desired contradiction.

To prove the first two inequalities of the statement, we consider \eqref{maxdestabdual} and note that since $\tilde{M}$ is globally generated away from a finite set, then the same must
apply for $c_1(\tilde{M})=c_1(\Mvee)$, and so $c_1(M)^2=c_1(\Mvee)^2\geq 0$. By \cite[Theorem 3.4.1]{huybrechts2010geometry}, we must have $c_2(\tilde{M})\geq 0$ as a consequence.

We now prove the last statement. Since $\Mvee$ is globally generated away from a finite set, we can inject an effective line-bundle $D_1$ into $\Mvee$, assume that the injection is
saturated, and get
$$0\rightarrow D_1\rightarrow \Mvee\rightarrow D_2\otimes \fini_{\eta}\rightarrow 0,$$
where $\eta$ is a possibly empty zero-dimensional subscheme and $D_2$ a line-bundle. Since $\Mvee$ is globally generated away from a finite set, then $D_2$ is also globally generated
(actually everywhere since it is base-component free). But then, $D_1.D_2\geq 0$, and dualising, we see that $c_2(M)\geq 0$.
\end{proof} 

In the proof of part (c) of Theorem \ref{main}, we will be needing the following result:

\begin{Prop}\label{HIT}
Suppose $D_1$ and $D_2$ are two divisors on a K3 surface, and suppose $D_2^2>0$. Then $D_1^2D_2^2\leq (D_1.D_2)^2$, with equality if and only if $(D_1.D_2)D_1\sim D_1^2D_2$.
\end{Prop}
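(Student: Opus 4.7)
This is a standard application of the Hodge index theorem on a K3 surface, so the plan is to reduce the statement to a single self-intersection computation on an auxiliary divisor designed to be orthogonal to $D_2$.

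The plan is to introduce the divisor
\[
D := (D_2^2)D_1 - (D_1.D_2)D_2.
\]
A direct calculation gives $D.D_2 = (D_2^2)(D_1.D_2) - (D_1.D_2)(D_2^2) = 0$, and expanding $D^2$ bilinearly yields
\[
D^2 = (D_2^2)^2 D_1^2 - 2(D_2^2)(D_1.D_2)^2 + (D_1.D_2)^2 D_2^2 = (D_2^2)\bigl(D_1^2 D_2^2 - (D_1.D_2)^2\bigr).
\]
Since $D_2^2 > 0$ and $D.D_2 = 0$, the Hodge index theorem forces $D^2 \leq 0$; dividing by the positive quantity $D_2^2$ gives the desired inequality $D_1^2 D_2^2 \leq (D_1.D_2)^2$.

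For the equality case, $D_1^2 D_2^2 = (D_1.D_2)^2$ is equivalent to $D^2 = 0$, and combined with $D.D_2 = 0$ and $D_2^2 > 0$ the Hodge index theorem forces $D$ to be numerically trivial. On a K3 surface $H^1(S,\fino_S) = 0$ and the Néron--Severi group is torsion-free, so numerical and linear equivalence coincide; hence $(D_2^2)D_1 \sim (D_1.D_2)D_2$. Multiplying this relation by $(D_1.D_2)$ and using $(D_1.D_2)^2 = D_1^2 D_2^2$ gives $(D_2^2)(D_1.D_2)D_1 \sim D_1^2(D_2^2)D_2$, whence cancelling the nonzero integer $D_2^2$ in the torsion-free Picard group yields $(D_1.D_2)D_1 \sim D_1^2 D_2$. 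Conversely, intersecting such a linear equivalence with $D_2$ immediately recovers $(D_1.D_2)^2 = D_1^2 D_2^2$.

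The only non-routine point is justifying the passage from numerical to linear equivalence in the equality statement; this is handled by the standard fact that $\Pic(S) = \mathrm{NS}(S)$ is torsion-free for a K3 surface, so no further obstacle arises.
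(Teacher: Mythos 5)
Your proof is correct and follows exactly the route the paper takes: the paper's proof simply cites the Hodge Index Theorem, an exercise in Friedman, and the coincidence of numerical and linear equivalence on a K3 surface, while you have written out in full the standard argument behind those citations (the auxiliary divisor $(D_2^2)D_1-(D_1.D_2)D_2$ orthogonal to $D_2$, the signature argument, and cancellation in the torsion-free N\'eron--Severi group). All steps check out; nothing further is needed.
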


\begin{proof}
This follows from the Hodge Index Theorem (see e.g.\ \cite[Corollary 2.16]{Barth}) and \cite[Chapter 1, Exercise 10]{Friedman}, and using that numeric and linear equivalence are the same
for divisors on a K3 surface.
\end{proof}

We now give the proof of Theorem \ref{main}.

\begin{proof}[Proof of Theorem \ref{main}] We begin by proving part (a) of the theorem.

Let $A$, $C$ and $L$ be as in the theorem, and let $\finfca$ be the associated Lazarsfeld--Mukai bundle. Since $\deg(A)\leq\frac{1}{6}L^2$, it follows from \cite[Theorem
3.4.1]{huybrechts2010geometry} that $\finfca$ is unstable, and so we obtain a maximal destabilising sequence \eqref{maxdestab}.

The injection $M \hookrightarrow \finfca$ can be composed with $\finfca\hookrightarrow \fino_S^{\oplus 3}$, yielding the following diagram, where $\fing$ is the cokernel:

\begin{equation}\label{diagram}
\xymatrix{0 \ar[r] & M \ar[r]\ar[d] & \fino_S^{\oplus 3} \ar[r]^<<<<<{\widetilde{\mathrm{ev}}}\ar@{=}[d] & \fing \ar[r]\ar[d]^{\phi} & 0 \\
            0 \ar[r] & \finfca \ar[r] & \fino_S^{\oplus 3} \ar[r]^<<<<<{\mathrm{ev}} & A \ar[r] & 0.
}
\end{equation}

By the snake lemma, $\ker(\phi)\cong N$, and since any torsion element of $\fing$ must map to $0$ in $A$ and $N$ is torsion-free, it follows that $\fing$ is torsion-free. Since $\rk(M)=2$,
by Proposition \ref{rk2}, we have $\rk(\fing)=1$, and so it follows that $\fing=D\otimes\finixi$, where $D=c_1(\Mvee)$ and $\finixi$ is the ideal sheaf of a possibly empty finite subscheme
$\xi$. Furthermore, $\phi$ is injective on global sections, since $h^0(S,N)=0$ (proof: If $h^0(S,N)>0$, then -- since $\rk(N)=1$ -- we can write $N\cong N'\otimes \fini_{\zeta}$ where
$\fini_{\zeta}$ is the ideal sheaf of a
possibly empty finite subscheme $\zeta$, and where $N'$ is an effective line bundle. But then $c_1(N)=N'.L\geq 0$, contradicting \eqref{maxdestab} being a maximal destabilising sequence). 

It is clear that $h^0(S,M)=0$ since $h^0(S,\finfca)=0$, and so $h^0(S,\fing)\geq 3=h^0(C,A)$. Since $\phi$ is injective on global sections, this means that $h^0(S,\fing)=3$, and so each
global section of $A$ comes from a unique global section of $\fing$. The map $\phi$ is an element of $\Hom(D\otimes\finixi,A)=\Hom(\finixi,A\otimes D^{\vee})$, implying that either $A\cong
D_{|C}\otimes\fini_{\xi'}$, with $\xi'=\xi\cap C$, or $h^0(C,A)>h^0(C,D\otimes\fini_{\xi'})$. However, we have $h^0(C,A)=h^0(S,D\otimes\finixi)$, and having
$h^0(S,D\otimes\finixi)>h^0(C,D\otimes\fini_{\xi'})$ would imply that
$h^0(S,D\otimes\fino_S(-C))>0$, which contradicts that $c_1(M).L>-\frac{2}{3}L^2$. 

We conclude that $A\cong D_{|C}\otimes\fini_{\xi'}$, and that every divisor in $|A|$ comes from restricting divisors in $|D|$ vanishing on $\xi$.\\ 
\\
Proof of part (b):

We recall that $D$ is by definition adapted to $L$ if $h^0(S,D)\geq 2$, $h^0(S,L\otimes D^{\vee})\geq 2$, and $h^0(C,D_{|C})$ is independent of the curve $C$ in $|L|_s$.

We already know that $h^0(S,D\otimes\finixi)=3$, so the first condition is clear. To show that $h^0(S,L\otimes D^{\vee})\geq 2$, note that $D\otimes L^{\vee}\cong c_1(N)$, which cannot be
effective since $c_1(N).L<-\frac{1}{3}L^2$. It thus suffices, by Riemann--Roch, to show that $c_1(\Nvee)^2\geq 0$. We have
$c_1(\Nvee)^2=(L-c_1(\Mvee)).c_1(\Nvee)=L.c_1(\Nvee)-c_1(\Mvee).c_1(\Nvee)>\frac{1}{3}L^2-c_1(\Nvee).c_1(\Mvee)$.  Sequence \eqref{maxdestabdual} gives us $\frac{1}{6}L^2\geq
c_2(\finfca^{\vee})=c_1(\Mvee).c_1(\Nvee)+c_2(\tilde{M})\geq c_1(\Mvee).c_1(\Nvee)$ (using Proposition \ref{rk2} on $c_2(\tilde{M})$, and that
$c_2(\Nvee)=0$ since it is locally free of rank $1$), and so we can conclude that $c_1(\Nvee)^2$ is nonnegative (actually, it is $\geq \frac{1}{6}L^2$). 

We now show that $h^0(C,D_{|C})$ is independent of the curve $C$ in $|L|_s$. By taking cohomology of the sequence $0\rightarrow D\otimes L^{\vee}\rightarrow D\rightarrow D_{|C}\rightarrow
0$, we see that it suffices to show that $h^1(S,D)=0$. We saw in Proposition \ref{rk2} that $c_1(M)^2$, which is the same as $D^2$, is nonnegative, and so $h^1(S,D)>0$ if and only if there
exists a $-2$-curve $\Gamma$ such that $\Gamma.D<0$ or $D=\fino_S(nE)$ for some positive integer $n$ and where $E$ is an elliptic curve (see \cite[Theorem]{knutsen2007sharp}). Since
$\tilde{M}$ is globally generated away from a finite set, then so is $D$ (it is actually base-point free, since it is on a K3 surface and has no base-components), and so no $-2$-curve can
intersect $D$ negatively. In order to prove that
$h^1(S,D)=0$, it therefore suffices to prove that $D^2>0$. 

To prove that $D^2>0$, the top row of \eqref{diagram} shows that $0=c_1(M).c_1(\fing)+c_2(M)+c_2(\fing)=-c_1(M)^2+c_2(M)+c_2(\fing)$. Since $c_2(M)\geq 0$ by Proposition \ref{rk2}, then
$c_1(M)^2\geq c_2(\fing)$. We obviously have $c_2(\fing)\geq 0$, and we have $c_1(M)^2=0$ only if $c_2(\fing)=0$, and hence only if $\fing=D$, with $h^0(S,D)=3$. However, in this case,
$A=D_{|C}$, and we clearly see that $D=E^{\otimes 2}$ where $E$ is an elliptic pencil.

We prove that $h^0(C',E^{\otimes 2}_{|C'})=3$ for all $C'\in |L|_s$ under the conditions of the theorem, given that there exists a curve $C\in |L|_s$ where $h^0(C,E^{\otimes 2}_{|C})=3$.
Consider the exact sequence
$$0\rightarrow E^{\otimes 2}\otimes L^{\vee}\rightarrow E^{\otimes 2}\rightarrow E^{\otimes 2}_{|C}\rightarrow 0.$$ 
If we take cohomology, we see that since $h^1(S,E^{\otimes 2})=1$, $h^0(S,E^{\otimes 2})=3$, $h^0(C,E^{\otimes 2}_{|C})=3$, and $h^0(S,E^{\otimes 2}\otimes L^{\vee})=0$ (the latter as a
consequence of $L.(E^{\otimes 2}\otimes L^{\vee})=L.(c_1(\Mvee)\otimes L^{\vee})<-\frac{1}{3}L^2$), we must have $h^1(S,E^{\otimes 2}\otimes L^{\vee})\leq 1$. If $h^1(S,E^{\otimes 2}\otimes
L^{\vee})=0$, then $h^0(C',E^{\otimes 2}_{|C'})=3$ for all $C'\in |L|_s$, and we are done. If $h^1(S,E^{\otimes 2}\otimes L^{\vee})=1$, then so is $h^1(S,L\otimes E^{\otimes (-2)})$, and by
\cite[Theorem]{knutsen2007sharp}, we either have $(L\otimes E^{\otimes (-2)})^2=0$ or there exists a $(-2)$-curve $\Gamma$ such that $(L\otimes E^{\otimes (-2)}).\Gamma\leq -2$. From the
condition $E^{\otimes 2}.L\leq \frac{1}{6}L^2$, we have $(L\otimes E^{\otimes (-2)})^2\geq\frac{2}{3}L^2>0$, and so we are in the case where there exists a $(-2)$-curve $\Gamma$ such that
$(L\otimes E^{\otimes (-2)}).\Gamma\leq -2$. 

We prove that $(L\otimes E^{\otimes (-2)}).\Gamma=-2$, thus contradicting the conditions of the theorem. If $(L\otimes E^{\otimes (-2)}).\Gamma<-2$, then $\Gamma^{\otimes 2}$ must be a base
component of $L\otimes E^{\otimes (-2)}$. However, since $h^0(S,L\otimes E^{\otimes (-2)})=h^0(S,L\otimes E^{\otimes (-2)}\otimes \Gamma^{\otimes (-2)})$, then
$\chi(S,L\otimes E^{\otimes (-2)}\otimes \Gamma^{\otimes (-2)})-\chi(S,L\otimes E^{\otimes (-2)})\leq h^1(S,L\otimes E^{\otimes (-2)})=1$ (noting that no $h^2$ terms are positive because of
the condition $E^{\otimes 2}.L\leq\frac{1}{6}L^2$); and Riemann--Roch gives us $-2(L\otimes E^{\otimes (-2)}).\Gamma-4\leq 1$, which is impossible.\\
\\
Proof of part (c):

We have $\Cliff(A)=d-4=c_2(\finfca)-4$. We must prove that $\Cliff(D_{|C})$ is at most equal to this.

By definition, $\Cliff(D_{|C})=D.L-2(h^0(C,D_{|C})-1)$. Since $D=c_1(\Mvee)$, $D.L=c_1(\Mvee).c_1(\Nvee)+c_1(\Mvee)^2$ and $h^0(C,D_{|C})\geq h^0(S,D)\geq \frac{1}{2}c_1(\Mvee)^2+2$, this
immediately gives us
\begin{equation}\label{Cliff(G)}
\Cliff(D_{|C})\leq c_1(\Mvee).\Nvee-2=c_2(\finfca)-c_2(\tilde{M})-2,
\end{equation}
where we recall that $\tilde{M}$ is as given in \eqref{maxdestabdual}. We must prove that $c_2(\tilde{M})\geq 2$.

Since $\tilde{M}$ is semistable, it follows from \cite[Theorem 3.4.1]{huybrechts2010geometry} that $c_2(\tilde{M})\geq \frac{1}{4}c_1(\tilde{M})^2$. In part (b) of the proof, we showed that
$c_1(\tilde{M})^2>0$ (since $D=c_1(\tilde{M})$) except for the case when $D_{|C}=A$. Since the result follows trivially in this latter case, we can assume that $c_1(\tilde{M})^2>0$, and so
$c_2(\tilde{M})\geq 1$. In the following, we will suppose $c_2(\tilde{M})=1$ (and hence $c_1(\tilde{M})^2\leq 4$) and show that this yields a contradiction.


First note that, by taking cohomology of \eqref{maxdestabdual} and recalling that $h^1(S,\finfca^{\vee})=h^2(S,\finfca^{\vee})=0$, we see that $h^1(S,\tilde{M})=h^2(S,\tilde{M})=0$ (we
proved that $h^2(S,\Nvee)=h^0(S,N)=0$ in part (a)). Also, since $\tilde{M}$ is of rank $2$ and globally generated away from a finite set, it must sit inside an exact sequence
\begin{equation}\label{M-sekvens}
0\rightarrow R_1\otimes\fini_{\nu}\rightarrow\tilde{M}\rightarrow R_2\otimes\fini_{\eta}\rightarrow 0,
\end{equation}
where $R_i$ are line-bundles and $\nu$ and $\eta$ finite subschemes. We can furthermore assume that $R_1$ is effective since $\tilde{M}$ has global sections, and $R_2$ is globally generated
and $R_2\otimes\fini_{\eta}$ globally generated away from a finite set; and hence $R_1.R_2\geq 0$, $R_2^2\geq 0$, and $\mathrm{length}(\nu),\mathrm{length}(\eta)\leq 1$. Note that
$R_1.R_2=1-\mathrm{length}(\eta)-\mathrm{length}(\nu)$. Also, \cite[Theorem]{knutsen2007sharp} gives us that $h^1(S,c_1(\tilde{M}))=0$. 

\paragraph{Case: $\mathrm{length}(\eta)=1$.} In this case, $R_1.R_2=\mathrm{length}(\nu)=0$, and since at least one $R_i$ must satisfy $R_i^2>0$, Proposition \ref{HIT} yields that $R_1^2R_2^2\leq 0$,
and so either $R_1^2<0$, or $R_1^2=0$. (If $R_1^2>0$ with $R_2^2=0$, we get $R_2=\fino_S$, and then $R_2\otimes\fini_{\eta}$ has no global sections.) If $R_1^2<0$, then also
$R_1.c_1(\tilde{M})=R_1.(R_1\otimes R_2)<0$, and so $R_1$ is a base component of $c_1(\tilde{M})$. However, since $\tilde{M}$ is globally generated away from a finite set, then so
must $c_1(\tilde{M})$, and we get a contradiction. 

If $R_1^2=0$, then $(R_1\otimes R_2)^2=R_2^2$, and putting $\fino_S(D_1)=R_2$ and $\fino_S(D_2)=R_1\otimes R_2$, we get equality in Proposition \ref{HIT}, and so $R_1=\fino_S$ (since
numerical and linear equivalence is the same for line bundles on K3 surfaces). However, in that case, taking cohomology of \eqref{M-sekvens} gives us that $h^1(S,R_2\otimes\fini_{\eta})=1$,
while cohomology of the sequence
$$0\rightarrow R_2\otimes\fini_{\eta}\rightarrow R_2\rightarrow \fino_{\eta}\rightarrow 0$$
yields $h^1(S,R_2\otimes\fini_{\eta})= \mathrm{length}(\eta)-h^0(S,R_2)+h^0(S,R_2\otimes\fini_{\eta})+h^1(S,R_2)$. Since $h^1(S,R_2)=0$ by \cite[Theorem]{knutsen2007sharp} and
$h^0(S,R_2\otimes\fini_{\eta})= h^0(S,R_2)-1$ since $R_2$ is globally generated, this gives us $h^1(S,R_2\otimes\fini_{\eta})=\mathrm{length}(\eta)-1=0$, a contradiction.

\paragraph{Case: $\length(\nu)=1$.} This case is similar to the previous case. Here we also have $R_1.R_2=0$, and in addition, $\length(\eta)=0$. Here, we cannot have $R_1^2>0$ with $R_2^2=0$,
because we get $R_2=\fino_S$, and dualising \eqref{M-sekvens} would imply that $\tilde{M}^{\vee}=M$ has global sections, a contradiction. So the two alternatives are $R_1^2<0$ or $R_1^2=0$,
as in the previous case. We cannot have $R_1^2<0$ for the same reason as in the previous case. If $R_1^2=0$ with $R_2^2>0$, we get $R_1=\fino_S$ as in the previous case, and $h^1(S,R_2)=1$.
However, since $R_2$ is globally generated with positive self-intersection, this is impossible by \cite[Theorem]{knutsen2007sharp}.

\paragraph{Case: $\length(\nu)=\length(\eta)=0$.} In this case, $R_1.R_2=1$, and since self intersection on a K3 surface is always even, Proposition \ref{HIT} yields that $R_1^2R_2^2\leq 0$. If
$R_1^2<0$, then it must be $\leq -2$, and we get $R_1.(R_1\otimes R_2)\leq -1$, and so $R_1$ is a base component of $c_1(\tilde{M})$, which contradicts $c_1(\tilde{M})$ being globally
generated. It
follows that $R_1^2\geq 0$. 

Having $R_1^2,R_2^2\geq 0$ implies that $(R_2\otimes R_1^{\vee})^2\geq -2$, and it follows from Riemann--Roch that either $R_2\geq R_1$ or $R_1\geq R_2$. By semistability of $\tilde{M}$, we have $R_1.L\leq \frac{1}{2}c_1(\tilde{M}).L=\frac{1}{2}(R_1\otimes R_2).L$. If $R_1\gneqq R_2$, this would give us $\frac{1}{2}(R_1\otimes R_2).L\lneqq R_1.L$ (by ampleness of $L$), which is impossible. It follows that $R_2\geq R_1$.

Now, since $R_2$ is globally generated, $(R_2\otimes R_1^{\vee}).R_2\geq 0$, and so $R_2^2\geq 2$ and $c_1(\tilde{M})^2\geq 4$. Since we originally had
$c_1(\tilde{M})^2\leq 4$ (as a consequence of assuming $c_2(\tilde{M})=1$), equality follows, together with $R_2^2=2$ and $R_1^2=0$. By \cite[Theorem]{knutsen2007sharp}, $h^1(S,R_2)=0$, and
since $h^2(S,\tilde{M})=0$, we get $h^2(S,R_1)=0$, and so $R_1=E^{\otimes n}$ where $E$ is an elliptic pencil. Since $R_1.R_2=1$, then $n=1$.

Note that since $R_2\otimes R_1^{\vee}>0$ and $\tilde{M}$ is semistable, then $\tilde{M}$ must be a non-split extension of $R_1$ and $R_2$. As we saw above, the dimension of isomorphism
classes of non-trivial extensions is $\mathrm{Ext}^1_{\fino_S}(R_2,R_1)-1=h^1(S,R_2\otimes R_1^{\vee})-1$, and so $h^1(S,R_2\otimes R_1^{\vee})>0$. By \cite[Theorem]{knutsen2007sharp}, this
implies that either $R_2\cong E\otimes E'$ where $E'$ is an elliptic pencil satisfying $E'.E=1$; or $(R_2\otimes R_1^{\vee}).\Gamma\leq -2$ for some $(-2)$-curve $\Gamma$, implying that
$R_2= E\otimes B\otimes \Gamma^{\otimes m}$, where $m\geq 1$ is an integer and $B\geq 0$ is a possibly trivial line bundle satisfying $B.\Gamma\geq 0$. 

If $R_2=E\otimes E'$, then $h^1(S,R_2\otimes R_1^{\vee})=0$, a contradiction.

If $R_2= E\otimes B\otimes \Gamma^{\otimes m}$, note that since $R_1=E$, then $1=R_1.R_2=E.B+mE.\Gamma\geq mE.\Gamma$. Since $(R_2\otimes R_1).\Gamma\geq 0$ (recall that $R_1\otimes R_2$ is
globally generated) and $(R_2\otimes R_1^{\vee}).\Gamma\leq -2$, we get $R_1.\Gamma\geq 1$. However, since $R_2.\Gamma\geq 0$ (recall that $R_2$ is globally generated), this means that
$(R_2\otimes R_1).\Gamma\geq 1$ instead of $\geq 0$, and we end up with $R_1.\Gamma\geq 2$. But then $R_2.R_1\geq mE.\Gamma\geq 2$, a contradiction.\\
\\
We conclude that $\Cliff(D_{|C})\leq\Cliff(A)$.
\end{proof}

\begin{proof}[Proof of Theorem \ref{main2}]
The proof of Theorem \ref{main2} uses exactly the same techniques as in the proof of Theorem \ref{main}. We include it here for the sake of completion.

The condition on $C$ and $A$ are that $\rho(g,1,d)<0$. In this case, it follows that $\finfca$ is non-simple, and hence non-stable.

Part (a) is proved using the same diagram as in \eqref{diagram}, the only difference being that $\rk(M)=1$, and that $M.L\geq -\frac{1}{2}L^2$ and $c_1(N).L\leq-\frac{1}{2}L^2$. The latter
inequality implies that $N$ has no global sections, and so $\phi$ is injective on global sections. It follows, from the arguments in the proof of Theorem \ref{main}, that each global
section of $A$ comes from a unique global section of $\fing$, and that the map must be the restriction map to $C$.

We now prove part (b): Following the proof of Theorem \ref{main} (b), it is clear that $h^0(S,D)\geq 2$. We have $h^0(S,L\otimes D^{\vee})=h^0(S,c_1(N)^{\vee})$. To prove that the latter is
$\geq 2$, is suffices by Riemann--Roch to show that $c_1(N)^2\geq 0$. We have $c_1(N).(M\otimes c_1(N))=c_1(N).L^{\vee}\geq\frac{1}{2}L^2=\frac{1}{2}M^2+M.c_1(N)+\frac{1}{2}c_1(N)^2$, and
so $\frac{1}{2}c_1(N)^2\geq \frac{1}{2}M^2$. Since $c_1(\tilde{M})=\Mvee$ and $\tilde{M}$ is globally generated away from a finite set, then $\Mvee$ is globally generated, and
so $M^2=(\Mvee)^2\geq 0$, and we can conlude that $h^0(S,L\otimes D^{\vee})\geq 2$. 

The argument that $h^0(C,D_{|C})$ is independent of the curve $C$ in $|L|_s$ is similar to the argument in the proof of Theorem \ref{main}. We see that no $(-2)$-curve can intersect $D$
negatively, and so $h^1(S,D)$ can be positive only if $D^2=0$. We see that $0=M.c_1(\fing)+c_2(\fing)$, and so $D^2=M^2\geq c_2(\fing)$. Thus, $D^2=0$ if and only if $\fing=D$. As a
consequence, $h^0(S,D)=2$, and so we must have $D=E$ where $E$ is an elliptic pencil. In that case, $h^1(S,D)=0$, and we conclude that $h^0(C,D_{|C})$ is independent of the curve $C$ in
$|L|_s$.

To prove (c), we have $\Cliff(A)=d-2=c_2(\finfca)-2$; and $\Cliff(D_{|C})=D.L-2(h^0(C,D_{|C})-1)\leq c_2(\finfca)-c_2(\tilde{M})-2\leq c_2(\finfca)-2$, and so $\Cliff(D_{|C})\leq\Cliff(A)$,
as desired.
\end{proof}

\section{An example of a linear system $|L|$ where $h^0(C,E^{\otimes 2}_{|C})$ depends on the curve $C$ in $|L|_s$}\label{example}

In this section, we give an example of a case where $h^0(C,E^{\otimes 2}_{|C})$ depends on the curve $C$ in $|L|_s$, thus showing that the condition in Theorem \ref{main} is necessary. We
first state the main result of this section before presenting a proposition and lemma to help us with the construction.

\begin{Theorem}\label{main3}
There exists a smooth K3 surface with an ample linear system $|L|$ and elliptic pencil $E$ such that $h^0(C,E^{\otimes 2}_{|C})$ depends on the curve $C$ in $|L|_s$.
\end{Theorem}

In order to obtain this, we will need a smooth K3 surface containing a $(-2)$-curve $\Gamma$ in addition to an elliptic pencil $E$ such that $E.\Gamma=2$. Furthermore, to ensure ampleness
of $L$, which we will define later, we need all divisors of $|E|$ to be irreducible. We start by proving that such a K3 surface exists.

\begin{Prop}\label{elliptic}
There exists a smooth K3 surface $S$ in $\tykkp^5$ which contains a $(-2)$-curve $\Gamma$ and an elliptic pencil $E$ such that $E.\Gamma=2$ and all divisors of $|E|$ are irreducible.
\end{Prop}

\begin{proof}
We will obtain the K3 surface $S$ by intersecting three quadric hypersurfaces $Q_1$, $Q_2$ and $Q_3$ in $\tykkp^5$ such that the intersection is smooth and irreducible, and where the hypersurfaces satisfy the following conditions:
\begin{itemize}
\item the quadric $Q_1$ contains a linear threespace $V_1$ and hence also a pencil of threespaces $V_2$ cut out by all hyperplanes $H$ containing $V_1$;
\item the threespace $V_1$ intersects $Q_2$ in the sum of two planes; and
\item the intersections of $Q_3$ with one of the planes in $V_1\cap Q_2$, and with $V_2\cap Q_2$ for all $V_2$, are smooth and irreducible.
\end{itemize}
We begin the proof by arguing that the above conditions will indeed produce line bundles with the desired properties. First of all, the intersection of $Q_3$ with one of the planes in $V_1\cap Q_2$ will be a plane conic, hence rational, and will be our desired $(-2)$-curve $\Gamma$. The pencil of threespaces $V_2$ will give us the desired elliptic pencil $E$, where clearly all divisors of $|E|$ will be irreducible. The curves in $|E|$ are elliptic because they are complete intersections of two quadrics in $\tykkp^3$, and the adjunction formula then gives us that the canonical bundle is trivial. We compute the intersection $E.\Gamma$ using the equality $(E+\Gamma)^2=(H-\Gamma_2)^2$, where $\Gamma_2$ is the other plane conic in $V_1\cap Q_2\cap Q_3$. We have $(E+\Gamma)^2=2E.\Gamma-2$, and $(H-\Gamma_2)^2=2$, using that $H.\Gamma_2=2$. It follows that $E.\Gamma=2$.

We now construct $Q_1$, $Q_2$ and $Q_3$ with the above-listed properties. The idea of the construction is to build cones over quadric surfaces $Z_i$ contained in linear threespaces $J_i$, with certain lines $T_i$ as base loci.

We start with $Q_1$. Let $J_1$ be any linear threespace, and let $T_1$ be a line that avoids $J_1$. Consider the linear system of quadric surfaces $Z_1$ in $J_1$. For given $Z_1$, we let $Q_1$ be the span of planes connecting $T_1$ with points on $Z_1$. This quadric fourfold is smooth outside of $T_1$ because of the following: By blowing up in $T_1$ and considering the strict transform $\tilde{Q}_1$ of $Q_1$, we see that by varying $Z_1$, the linear system we obtain in $\tykkp^5\times\tykkp^3$ is base-point free, since the preimage of $T_1$ corresponds to planes connecting $T_1$ with points in $J_1$. Thus, the general $\tilde{Q}_1$ is smooth by Bertini's theorem, and so the general $Q_1$ is smooth outside of $T_1$. We choose the desired threespace $V_1$ to be the span of $T_1$ with any line on $Z_1$. The pencil of threespaces $V_2$ is, as previously mentioned, cut out by considering hyperplanes $H$ containing $V_1$.

Before constructing $Q_2$, we first choose a line $T_3$ not contained in $Q_1$ and which avoids $V_1$, and a linear threespace $J_3$ avoiding $T_3$ and chosen such that $Q_1\cap J_3$ is smooth and irreducible (which can be done by considering general $\tilde{J}_3$ on $\tilde{Q}_1$). These will be used to construct $Q_3$ later. Note that $T_3$ intersects $Q_1$ in two points.

We now construct $Q_2$: First, let $T_2$ be a line intersecting $V_1$ in a point; avoids $T_1$, $T_3$ and the two $V_2$'s that intersect $T_3$; and is not contained in $Q_1$. We next consider all possible threespaces $J_2$ and note that all intersect $V_1$ in at least a line $\ell$. For each $J_2$ and each line $\ell$ in $J_2\cap V_1$, consider the linear system of quadric surfaces $Z_2$ containing $\ell$. The general $J_2$ will intersect $V_1$ in only a single line $\ell$, and will avoid $T_2$ and $T_1$. For given $J_2$ and $Z_2$, we let $Q_2$ be given by the span of planes connecting $T_2$ with points on $Z_2$. The threespace $V_1$ then intersects $Q_2$ in two planes, where one is given by the span of $V_1\cap T_2$ and the line $\ell$. The general $Q_2$ intersects $J_3$ in a smooth irreducible surface, since the various $Q_2$ define a base-point free linear system on $J_3$ and we can thus apply Bertini's theorem.

It now remains to construct $Q_3$. We have already chosen $T_3$ and $J_3$. Consider the linear system of quadric surfaces $Z_3$ in $J_3$, and note that the general $Z_3$ intersects $Q_2$ in a smooth irreducible curve. We now construct $Q_3$ just as with $Q_1$ and $Q_2$ and connect planes between $T_3$ and points in $Z_3$. The resulting linear system of $Q_3$'s cut out a base-point free linear system on the two planes in $V_1\cap Q_2$. (Proof: Let $\Pi$ be one of the planes. Since $T_3$ avoids $V_1$, it in particular avoids $\Pi$. Each point on $\Pi$ and $T_3$ span a plane, which intersects $J_3$ in a point. We can then choose $Z_3$ so that it avoids that point.) By Bertini's theorem, the general $Q_3$ cuts out a smooth conic on each of the planes in $V_1\cap Q_2$.

To show that every element $V_2\cap Q_2\cap Q_3$ of the elliptic pencil $|E|$ is irreducible, first consider the $V_2$'s that avoid $T_3$. It is then clear that $V_2\cap Q_2\cap Q_3$ is found by first intersecting $Z_3$ with $Q_2$, yielding a curve $C$ in $Z_3$, and then considering the projection map to $V_2$ by planes connecting points on $C$ with $T_3$. This projection map is clearly an isomorphism, and so as long as $C$ is irreducible, then the corresponding curve in $V_2$ is also irreducible. In the two cases when $V_2$ intersects $T_3$, these threespaces avoid $T_2$, by how $T_2$ was chosen, and so we can consider the curve $C'=Z_2\cap Q_3$ and construct a similar projection map using planes through $T_2$. Since for general $Q_2$ and $Q_3$ we have $Z_2\cap Q_3$ irreducible and $Z_3\cap Q_2$ irreducible, we are done.

It remains to show that $Q_1\cap Q_2\cap Q_3$ is smooth. First of all, by blowing up in $T_1$, we have already seen that the general $\tilde{Q}_1$ is smooth. We now also blow up in $T_2$ and consider the linear system of $\tilde{\tilde{Q}}_2$'s on $\tilde{\tilde{Q}}_1$. Using the same argument as in the $T_1$ case (which is possible since the general $J_2$ avoids $T_1$), we see that the linear system of $\tilde{\tilde{Q}}_2$'s is also base-point free, and so the general element defines a smooth divisor on $\tilde{\tilde{Q}}_1$. We can choose $Q_2$ such that $\tilde{\tilde{Q}}_1\cap\tilde{\tilde{Q}}_2$ avoids $T_3$, and so the linear system of $\tilde{\tilde{Q}}_3$'s restricted to $\tilde{\tilde{Q}}_1\cap\tilde{\tilde{Q}}_2$ will then also be base-point free, and the general $\tilde{\tilde{Q}}_1\cap\tilde{\tilde{Q}}_2\cap\tilde{\tilde{Q}}_3$ is therefore smooth. The general $Q_1\cap Q_2\cap Q_3$ avoids $T_1$, $T_2$ and $T_3$, and is thus isomorphic to $\tilde{\tilde{Q}}_1\cap\tilde{\tilde{Q}}_2\cap\tilde{\tilde{Q}}_3$. The result follows.
\end{proof}

In the following, we will let $S$ be a K3 surface as given in Proposition \ref{elliptic}. We now consider the line-bundle $L=E^{\otimes a}\otimes\Gamma^{\otimes
(a-1)}$ where $a\geq 3$ is an integer. (We let $a\geq 7$ if we wish the condition $\deg(A)\leq\frac{1}{6}L^2$ from Theorem \ref{main} to be satisfied, but our example works for all $a\geq
3$.) In the following lemma, we prove that $|L|$ contains smooth irreducible curves, and that $L$ is ample.

\begin{Lemma}\label{ample}
Let $L$ and $S$ be as above. Then:
\begin{itemize}
\item[(a)] the linear system $|L|$ contains smooth, irreducible curves; and
\item[(b)] the line-bundle $L$ is ample.
\end{itemize}
\end{Lemma}

\begin{proof}
In part (a), we prove that $\Gamma$ is not a base component of $|L|$. Then, since linear systems on K3 surfaces without base components are base-point free, we can apply Bertini's theorem
to conclude smoothness for the general curves. In order to prove that $\Gamma$ is not a base component, we show that $h^0(S,L)>h^0(S,L\otimes \Gamma^{\vee})$. Using
\cite[Theorem]{knutsen2007sharp}, $h^1(S,L)=h^1(S,L\otimes \Gamma^{\vee})=0$, and so $h^0(S,L)=\frac{1}{2}L^2+2$ and $h^0(S,L\otimes
\Gamma^{\vee})=\frac{1}{2}(L\otimes\Gamma^{\vee})^2+2=h^0(S,L)-L.\Gamma-1=h^0(S,L)-3$. It follows that the general element of $|L|$ is smooth and irreducible.

To prove (b), we first of all have $L.E>0$ and $L.\Gamma>0$. It is also clear that $L.D\geq 0$ for any other irreducible curve
    $D$, since $|L|$ contains irreducible curves. Suppose $L.D=0$ for some $D$. Since $D$ can have neither $E$ nor $\Gamma$ as a component, it follows that $E.D=\Gamma.D=0$.
    We now use Proposition \ref{HIT} with $L=D_2$ and $D=D_1$, and see that $D^2\leq 0$. Suppose first that
    $D^2=0$. Using Proposition \ref{HIT} again, this time with $D_2=L$ and $D_1=L\otimes D^{\vee}$, it follows that $D$ must be trivial.

Now suppose $D^2<0$. Since $D$ is irreducible, this means that $D$ is a $(-2)$ curve. Consider the exact sequence
$$0\rightarrow E^{\vee}\rightarrow \fino_S\rightarrow \fino_{E'}\rightarrow 0,$$
where $E'$ is a smooth elliptic curve in $|E|$. Tensor the sequence with $D$ and take cohomology. We have $h^1(S,D)=0$ and $h^1(E',D_{|E'})=1$, since $D_{|E'}$ is trivial (recall that
$D.E=0$) and $\omega_{E'}\cong\fino_{E'}$. This gives us
    $h^2(S,D\otimes E^{\vee})=h^0(S,E\otimes D^{\vee})=1$, implying that $D<E$, which contradicts Proposition \ref{elliptic}. We can thus conclude that $L$ is ample. 
\end{proof}

We now prove the main result of this section. The idea is to consider divisors in $|E^{\otimes 2}\otimes\Gamma|$ restricted to various curves $C$ in $|L|_s$ and subtract $\Gamma\cap C$. For
the general curves, we get the same linear system as from $|E^{\otimes 2}|$. However, we show that there exist other curves $C'$ where divisors in $|E^{\otimes 2}\otimes \Gamma|$ without
$\Gamma$ as a component intersect $C'$ exactly in the points $\Gamma\cap C'$, and the linear system on $C'$ thus obtains an extra dimension.

\begin{proof}[Proof of Theorem \ref{main3}]
Consider the exact sequence
\begin{equation}\label{2E-sekvens}
0\rightarrow L^{\vee}\otimes E^{\otimes 2}\rightarrow E^{\otimes 2}\rightarrow E^{\otimes 2}_{|C}\rightarrow 0,
\end{equation}
where $C$ is a curve in $|L|_s$. We argue that $h^1(S,L^{\vee}\otimes E^{\otimes 2})=1$. Note that this equals $h^1(S,L\otimes E^{\otimes (-2)})$, and by \cite[Theorem]{knutsen2007sharp},
$h^1(S,L\otimes E^{\otimes (-2)})>0$ while $h^1(S,L\otimes E^{\otimes (-2)}\otimes \Gamma^{\vee})=0$. By comparing $\chi(S,L\otimes E^{\otimes (-2)})$ with $\chi(S,L\otimes E^{\otimes
(-2)}\otimes\Gamma^{\vee})$, and using that $h^0(S,L\otimes E^{\otimes (-2)})=h^0(S,L\otimes E^{\otimes (-2)}\otimes\Gamma^{\vee})$, we get that $h^1(S,L\otimes E^{\otimes (-2)})=1$.

Now tensor \eqref{2E-sekvens} with $\Gamma$ and take cohomology. We see that $H^0(S,E^{\otimes 2}\otimes\Gamma)\cong H^0(C,E^{\otimes 2}\otimes \Gamma_{|C})$, and we can therefore conclude
that the linear system $|E^{\otimes 2}_{|C}|$ is found precisely by considering divisors in $|E^{\otimes 2}\otimes \Gamma|$ that, restricted to $C$, are zero in $\Gamma\cap C$.

Note that divisors in $|E^{\otimes 2}\otimes \Gamma|$ that have $\Gamma$ as a component will not cut out any extra divisors in $|E^{\otimes 2}_{|C}|$ apart from those already cut out by
$|E^{\otimes 2}|$ on $S$. We must therefore consider curves in $|E^{\otimes 2}\otimes \Gamma|$ that do not have $\Gamma$ as a component, but still cut through $C$ exactly where $C$
intersects $\Gamma$.

We now have two situations: First of all, consider curves $J$ in $|E^{\otimes 2}\otimes \Gamma|$ that do not have $\Gamma$ as a component. By considering the exact sequence
\begin{equation}\label{2e+gamma}
0\rightarrow E^{\otimes 2}\rightarrow E^{\otimes 2}\otimes \Gamma\xrightarrow{\psi} (E^{\otimes 2}\otimes \Gamma)_{|\Gamma}\rightarrow 0
\end{equation}
and noting that $h^1(S,E^{\otimes 2})=1$ while $h^1(S,E^{\otimes 2}\otimes \Gamma)=0$, we see that by varying $J$, one dimension of divisors in $|(E^{\otimes 2}\otimes
\Gamma)_{|\Gamma}|=|\fino_{\Gamma}(2)|$ is cut out. The image of $\psi$ cuts out a sub-linear system of $|\fino_{\Gamma}(2)|$ that we denote by $\mathfrak{d}$. Since $|E^{\otimes
2}\otimes\Gamma|$ is base-component free and hence base-point free, it follows that $\mathfrak{d}$ is base-point free. Now consider the exact sequence
$$0\rightarrow L\otimes\Gamma^{\vee}\rightarrow L\xrightarrow{\tau} L_{|\Gamma}\rightarrow 0,$$
and note that $h^1(S,L\otimes\Gamma^{\vee})=0$, by \cite[Theorem]{knutsen2007sharp}. We see here that $|L|$ cuts out the entire linear system $|\fino_{\Gamma}(2)|$, and we can then consider
the pre-image of the sub-linear system $\mathfrak{d}$, which then is a sub-linear system $\mathfrak{L}$ of $|L|$ without base-points along $\Gamma$. We prove that $\mathfrak{L}$ is also
base-point free everywhere else, so that we can apply Bertini's theorem and conclude that the general element of $\mathfrak{L}$ is smooth.

The proof is by induction on $a$ in the expression $L=E^{\otimes a}\otimes \Gamma^{\otimes (a-1)}$, where we start with $a=2$. In this case, the curves in $L$ and in $|E^{\otimes 2}\otimes
\Gamma|$ are the same, so the result is obvious. Now for each $a\geq 3$, suppose there is a base-point free sub-linear system $\mathfrak{L}'$ of $|E^{\otimes (a-1)}\otimes\Gamma^{\otimes
(a-2)}|$ consisting of divisors that intersect $\Gamma$ where the curves $J$ intersect. Consider the linear system $|E\otimes\Gamma|$. By Riemann--Roch, it contains divisors that do not
have $\Gamma$ as a component, and so it is clearly base-component free and hence base-point free. Also, since $(E\otimes\Gamma).\Gamma=0$, the irreducible elements of this linear system
avoids $\Gamma$. Denote these elements by $V$. It then follows that $\mathfrak{L}'+V$ is a subset of divisors in $|E^{\otimes a}\otimes\Gamma^{\otimes (a-1)}|$ that intersect $\Gamma$ where
$J$ intersects. It is clearly base-point free, and so we can conclude that the sub-linear system $\mathfrak{L}$ is also base-point free. By Bertini's
theorem, it follows that the general element of $\mathfrak{L}$ is smooth.

We conclude that for general $J\in |E^{\otimes 2}\otimes\Gamma|$ without $\Gamma$ as a component, and smooth curves $C'$ in $|L|$ that pass through $J\cap\Gamma$, there is an effective
divisor $J\cap C'-\Gamma\cap C'\in|E^{\otimes 2}_{|C'}|$ which is not cut out by a divisor in $|E^{\otimes 2}|$. As a result, we get $h^0(C',E^{\otimes 2}_{|C'})=4$.

It remains to prove that there exists a smooth, irreducible curve $C''\in |L|$ where the above situation does not occur. It then suffices to find a curve $C''$ such that no divisor $J$ in
$|E^{\otimes 2}\otimes \Gamma|$ satisfies $J\cap\Gamma=C''\cap \Gamma$. This follows by the exact sequence \eqref{2e+gamma}, where we saw that $\psi$ is not surjective on global sections
and hence that the general divisors $Z\in |(E^{\otimes 2}\otimes \Gamma)_{|\Gamma}|$ are not cut out by any of the divisors in $|E^{\otimes 2}\otimes \Gamma|$. From the argument above, it
follows that any curve $C''$ that cuts out such a divisor $Z$ on $\Gamma$ will satisfy $h^0(C'',E^{\otimes 2}_{|C''})=3$.
\end{proof}

\paragraph{Acknowledgements.} Thanks to Shengtian Zhou for helpful comments. This work was done during our stay at the University of Utah, and we are grateful for their warm hospitality. The
author would also like to thank the referee for helpful suggestions.

This work was funded by the research stipend of Telemark University College.

\bibliographystyle{plain}
\bibliography{NHR}

\begin{thebibliography}{10}

\bibitem{Aprodu-Farkas}
M.~Aprodu and G.~Farkas.
\newblock {Green's conjecture for curves on arbitrary K3 surfaces}.
\newblock {\em Compos.\ Math.}, 147:839--851, 2011.

\bibitem{Barth}
W.P. Barth, K.~Hulek, C.A.M. Peters, and A.~van~de Ven.
\newblock {\em Compact Complex Surfaces}, volume~4.
\newblock Springer Verlag Berlin Heidelberg, 2004.

\bibitem{Ciliberto}
C.~Ciliberto and G.~Pareschi.
\newblock Pencils of minimal degree on curves on a {K}3 surface.
\newblock {\em J.\ Reine Angew.\ Math.}, 460:14--36, 1995.

\bibitem{D-M}
R.~Donagi and D.R. Morrison.
\newblock Linear systems on {K}3 sections.
\newblock {\em J.\ Diff.\ Geom.}, 29:49--64, 1989.

\bibitem{Friedman}
R.~Friedman.
\newblock {\em Algebraic Surfaces and Holomorphic Vector Bundles}.
\newblock Springer Verlag, 1998.

\bibitem{G-L}
M.~Green and R.~Lazarsfeld.
\newblock {Special divisors on curves on a K3 surface}.
\newblock {\em Invent.\ Math.}, 89(2):357--370, 1987.

\bibitem{huybrechts2010geometry}
Daniel Huybrechts and Manfred Lehn.
\newblock {\em The geometry of moduli spaces of sheaves}.
\newblock Cambridge University Press, 2010.

\bibitem{Knutsen}
A.L. Knutsen.
\newblock On two conjectures for curves on {K}3 surfaces.
\newblock {\em Internat.\ J.\ Math.}, 20:1547--1560, 2009.

\bibitem{knutsen2007sharp}
A.L. Knutsen and A.F. Lopez.
\newblock {A sharp vanishing theorem for line bundles on K3 or Enriques
  surfaces}.
\newblock {\em Proc.\ Amer.\ Math.\ Soc.}, 135(11):3495--3498, 2007.

\bibitem{Lazarsfeld}
R.~Lazarsfeld.
\newblock Brill--{N}oether--{P}etri without degenerations.
\newblock {\em J.\ Diff.\ Geom.}, 23:299--307, 1986.

\bibitem{lelli2013stability}
M.~Lelli-Chiesa.
\newblock {Stability of rank-3 Lazarsfeld--Mukai bundles on K3 surfaces}.
\newblock {\em Proc.\ Lond.\ Math.\ Soc.}, 107:451--479, 2013.

\bibitem{lelli2015generalized}
M.~Lelli-Chiesa.
\newblock Generalized lazarsfeld--mukai bundles and a conjecture of donagi and
  morrison.
\newblock {\em Adv.\ Math.}, 268:529--563, 2015.

\bibitem{Tyurin}
A.N. Tyurin.
\newblock {Cycles, curves and vector bundles on an algebraic surface}.
\newblock {\em Duke Math.\ J.}, 54(1):1--26, 1987.

\end{thebibliography}

\end{document}